\documentclass{new}
\usepackage[T1]{fontenc}
\usepackage[utf8]{inputenc}  
\usepackage[english]{babel}
\usepackage{newunicodechar}
\usepackage{graphicx}
\usepackage{dcolumn}
\usepackage{bm}
\usepackage{amssymb}
\usepackage{bbm}
\usepackage{dsfont}
\usepackage{bbold}
\usepackage{mathbbol}
\usepackage{amsmath}
\usepackage{enumerate}
\usepackage{enumitem}
\usepackage{url}
\usepackage{amsthm}
\usepackage{layout}
\usepackage{epsfig}
\usepackage{graphicx}
\usepackage{epstopdf}
\usepackage{booktabs}
\usepackage{float}
\usepackage{subfigure}
\usepackage{lipsum}
\usepackage{mathrsfs}
\usepackage{booktabs}
\usepackage{blindtext}
\usepackage{appendix}
\usepackage[hyperindex,breaklinks]{hyperref}
\usepackage{enumerate}
\usepackage{enumitem}
\usepackage{makecell}
\usepackage{booktabs}
\usepackage{multirow}
\usepackage{graphicx}
\usepackage{tabularx}
\usepackage{array}
\usepackage{subfigure}\usepackage{color} 
\allowdisplaybreaks[4]

\def\C{\mathbb{C}}
\def\N{\mathbb{N}}
\def\<{\langle}
\def\>{\rangle}

\numberwithin{equation}{section} %%% Equations numbered by section. If you don't want it, please delete it.

\begin{document}

 \PageNum{1}
 \Volume{201x}{Sep.}{x}{x}
 \OnlineTime{August 15, 201x}
 \DOI{0000000000000000}
 \EditorNote{Received x x, 201x, accepted x x, 201x}

\abovedisplayskip 6pt plus 2pt minus 2pt \belowdisplayskip 6pt
plus 2pt minus 2pt
%%%%%%%%%%%%%%%%
\def\vsp{\vspace{1mm}}
\def\th#1{\vspace{1mm}\noindent{\bf #1}\quad}
\def\proof{\vspace{1mm}\noindent{\it Proof}\quad}
\def\no{\nonumber}
\newenvironment{prof}[1][Proof]{\noindent\textit{#1}\quad }
{\hfill $\Box$\vspace{0.7mm}}
\def\q{\quad} \def\qq{\qquad}
\allowdisplaybreaks[4]

%%%%%%%%%%%%%%%%%%%%%%%%%%%%%%%%%%%%%%%%%%%%%%%%%%%%%%%%%%%%%%%%%%%%%%%%%%%%%%%%%%%%%%%%%%%%%%%
%%-------------------       Beginning of  Author's Definitions       -------------------%%
%%                     Note: You may add your own definitions here.

%%-------------------         the end of  Author's Definitions           -------------------%%

\AuthorMark{B. Zheng, S. Yang, J. Hou and K. He }                             %%%  appear on the head of even pages  %%%

\TitleMark{Violation of Bell-type Inequalities on Mutually-commuting von Neumann Algebra Models}  %%% Running Title, appear on the head of odd pages  %%%

\title{Violation of Bell-type Inequalities on Mutually-commuting von Neumann Algebra Models of Entanglement Swapping Networks        %%%   Main Title of your paper  %%%
\footnote{Shuyuan Yang and Bingke Zheng contribute equally to the work. Supported by the National Natural Science
Foundation of China  (Grant No. 12271394, 12571138)}}                  %%%   the Fund which you are supported by  %%%

\author{Bingke \uppercase{Zheng}}             %%%  1st Author's information   %%%
    {College of Mathematics, Taiyuan University of Technology, Taiyuan, 030024, China}

\author{Shuyuan \uppercase{Yang}}             %%%  1st Author's information   %%%
    {School of Mathematics, North University of China, Taiyuan, 030051, China,
\\School of Cyberspace Science and Technology, Beijing Institute of Technology, Beijing, 100081, China
\\
    E-mail\,$:$ yangshuyuan2000@163.com}

\author{Jinchuan \uppercase{Hou}}             %%%  1st Author's information   %%%
    {College of Mathematics, Taiyuan University of Technology, Taiyuan, 030024, China\\
    E-mail\,$:$ jinchuanhou@aliyun.com}

\author{Kan \uppercase{He}}             %%%  1st Author's information   %%%
    {College of Mathematics, Taiyuan University of Technology, Taiyuan, 030024, China\\
    E-mail\,$:$ hekan@tyut.edu.cn}    

\maketitle

\Abstract{Violation of Bell inequalities in bipartite systems represented by mutually-commuting von Neumann algebras has pioneered the study of vacuum entanglement in algebraic quantum field theory. It is unexpected that the maximal violation of Bell inequality can discover algebraic structures. In the paper, we establish the mutually-commuting von Neumann algebra model for entanglement swapping networks and Bell-type inequalities on this model. It generalizes the bipartite case to the ternary case. These algebras are all general von Neumann algebras, which provide a natural perspective to investigate Bell nonlocality in quantum networks in the infinitely-many-degree-of-freedom setting. We determine various bounds for Bell-type inequalities based on the structure of von Neumann algebras, and identify the algebraic structural conditions required for their violation. Finally, we show that the maximal violation of Bell-type inequalities in entanglement swapping networks can be used to determine partially the type classification of the underlying von Neumann algebras.
}      % the abstract

\Keywords{von Neumann algebra, Bell nonlocality, Bell inequality, Quantum network}        % the keywords

\MRSubClass{81R15, 47A63, 47L30, 47N50}      % MR(2000) Subject Classification

\section{Introduction}
In non-relativistic quantum mechanics, Bell nonlocality demonstrates that local measurements performed on one subsystem of a quantum state can instantaneously influence the measurement outcomes on another subsystem, regardless of the spatial separation between them \cite{SCA, JSB, JS1}. Such nonlocal correlations can be detected through Bell inequalities, which serve as constraints that all local correlations must obey \cite{ABN, CJL, Guo, CG, HSA}. It has been demonstrated to offer quantum advantages in various device-independent quantum information tasks, including communication complexity \cite{RH}, quantum key distribution \cite{JLA, LSA}, randomness amplification \cite{S, RAA}, and measurement-based quantum computation \cite{RH1, RDH}.

Meanwhile, motivated by the quantum field theory (QFT), which originates from the study of relativistic quantum mechanics, many novel quantum phenomena in systems with infinitely many degrees of freedom have been discovered \cite{HRDK,FK,ASR,MIPRE,CB1,STal,VLAS}. This differs from the non-relativistic quantum-mechanical setup, which is usually linked to type I von Neumann algebras and relies on the algebraic tensor product as its mathematical framework \cite{KR,LNP,SJ}. These are two distinct models, referred to respectively as the tensor product algebra  (TPA) model and the mutually-commuting von Neumann (observable) algebra (MCvNA) model. In the MCvNA model, there is, in general, no tensor product decomposition of the Hilbert space describing subsystems. However, it should be pointed out that relying solely on the TPA model to discuss quantum information problems has drawbacks \cite{RLJ,FCK,WE}. It  fails to provide a universal framework for accurately describing phenomena in systems with infinite degrees of freedom and the quantum field theory, which requires the language of type III von Neumann algebras. Research on quantum information problems on von Neumann algebras has received significant attention and yielded many meaningful results from a mathematical perspective \cite{HLW1,Ikeda1,cuijianlian1,cuijianlian2,Yinzhi1,Gaoli1,Gaoli2,BDE1,KJD1,MM1,CKL1,JL1,wujinsong1,wujinsong2,wujinsong3}.

In the MCvNA model, the algebra of observables of  quantum systems is described by a von Neumann algebra $\mathcal{M}$, with $\mathcal{M}_\mathcal A$ and $\mathcal{M}_\mathcal B$ being two mutually commuting von Neumann subalgebras of $\mathcal{M}$ such that $(\mathcal{M}_\mathcal A \vee \mathcal{M}_\mathcal B)''=\mathcal{M}$.
Here, $\mathcal{M}''$ denotes the double commutant of $\mathcal{M}$ \cite{JMe,ON,MIPRE}. It has been shown that the mutually-commuting von Neumann algebra model provides a more general framework \cite{MIPRE}. In the 1980s early, Summers et al. first introduced the maximal violation of Bell inequality and proved that its value is bounded by 2$\sqrt{2}$ in the MCvNA model of bipartite systems, with equality attainable if and only if each algebra contains a copy of $\mathcal{M}_2({\Bbb C})$ \cite{summer1}. This shows that Bell nonlocality is not merely a quantum peculiarity but a structural feature encoded in the classification of operator algebras, providing rigorous tools to quantify non-classical correlations in relativistic quantum systems \cite{SR}. Translating these bounds into the vacuum representation of algebraic quantum field theory, they show that tangent wedge algebras are always maximally correlated, whereas strictly spacelike-separated wedges decay exponentially with mass-governed distance \cite{summer2,summer3,summer4,summer5,summer6}.  
These works reveal a novel algebraic invariant, termed the Bell correlation invariant, which distinguishes infinitely many isomorphism classes of pairs of mutually commuting von Neumann algebras and links the maximal violation to the occurrence of the hyperfinite type $\rm II_1$ factor \cite{SR}. This is a pioneering work to make Bell nonlocality in QFT serve as a crucial bridge connecting quantum information science with fundamental physics \cite{SR,SBA}. It provides a rigorous framework for reconciling quantum entanglement with relativistic causality, resolves conceptual challenges such as impossible measurements, and reveals how fundamental symmetries like parity violation affect quantum correlations \cite{DHL,FCR,NY,KSA,MY}.

In contrast to entanglement originating from an individual source, quantum networks comprise numerous small-scale entangled states. Owing to the independence among distinct sources, the correlations emerging from quantum networks exhibit non-convex characteristics that transcend the polytopes associated with single-source entanglement \cite{nc1, nc2, nc3, nc4, nc5,Tava}. To date, Bell-type inequalities in the non-relativistic quantum mechanics have been devised to certify nonlocal correlations across diverse network architectures, such as entanglement-swapping networks \cite{nc1, 5}, chain configurations \cite{8, AMID}, star topologies \cite{9, 10}, polygon structures \cite{11, 13,npj2}, tree-shaped networks \cite{14, 15, 16}, arbitrary acyclic networks \cite{nc3, nc4, any3}, and arbitrary $k$-independent networks \cite{12}. Alternative research directions examine the stronger forms of network nonlocality that surpass hybrid implementations involving classical variables and post-quantum resources \cite{fnn, LYK}. Nevertheless, limited progress has been made concerning the discrimination of correlations produced by different networks and the subsequent identification of underlying quantum network topologies \cite{npj1}.
Recently, the notion of bilocality in an  entanglement swapping network based on the MCvNA model has already been introduced by Ligthart et al. \cite{LLD,LLD1}, and Xu has addressed the inclusion problem between TPA model and MCvNA model in this setting \cite{XuXiang}. However, Bell-type inequalities in the MCvNA model have not yet been established. In this paper, we aim to establish bilocal inequalities within the mutually-commuting von Neumann algebra model and investigate how the degree of their violation is related to the structural properties of the algebras.

\section{Ternary mutually-commuting von Neumann algebra models and entanglement swapping networks}\label{sec_XN}

\begin{figure}
\centering
\includegraphics[scale=0.45]{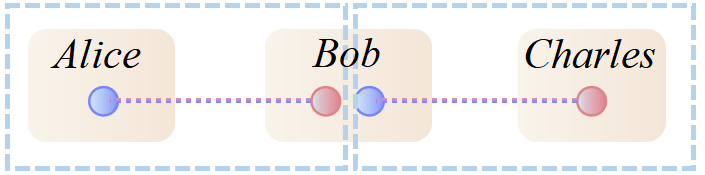}
\caption{An entanglement swapping network scenario with two sources. The connection between two parties represents the sharing of the physical system between them.}
\label{bi}
\end{figure}

{\it Quantum bilocal scenario.}
In non-relativistic quantum mechanics, the quantum entanglement swapping network (see Fig.\,\ref{bi}) is a scenario of three parties consisting of Alice, Bob and Charles, and two sources $\rho_{AB}$, $\rho_{BC}$ shared between them. The inputs and outputs of the measurements performed by the three parties are denoted as $x,y,z$ and $a,b,c$, respectively. Assume that each party performs binary-input and binary-output measurements, with the observables for Alice, Bob, and Charles denoted as $A_x$, $B_y$ and $C_z$, respectively, with  $x, y, z, a, b, c\in \{0,1\}$. Here it is required that the spectra of operators $A_x,B_y,C_z$ are all $\{-1,1\}$, implying that $-I\le A_x,B_y,C_z\le I$.
\iffalse
The measurements are described by positive operator-valued measures (POVMs): for Alice, ${A_{a|x}}$ with outcome $a$; for Bob, ${B_{b|y}}$ with outcome $b$; and for Charles, ${C_{c|z}}$ with outcome $c$. After measurement, they obtain outcomes $a,b,c$, respectively, where $x, y, z\in \{0,1\} \ \text{and}\  a, b, c \in\{-1,1\}$. From these POVMs, one can define the associated observables $A_x = A_{0|x} - A_{1|x}$, $B_y = B_{0|y} - B_{1|y}$, and $C_z = C_{0|z} - C_{1|z}$, satisfying $-I\le A_x,B_y,C_z\le I$. Bob's measurement $y$ may correspond to a joint measurement on the two systems that he receives from each source. 
\fi
The correlations between the measurement outcomes of the three parties are described by the joint probability distribution $p(abc|xyz)$. In this scenario, $p(abc|xyz)$ is said to be bilocal if it can be written as
{\small
\begin{equation*}
p(abc|xyz)=\int\int d\lambda d\mu p_1(\lambda)p_2(\mu)p(a|x,\lambda)p(b|y,\lambda,\mu)p(c|z,\mu),
\end{equation*}}

\noindent
where $\lambda$ and $\mu$ characterize the hidden variables of the systems produced by the sources $\rho_{AB}$ and $\rho_{BC}$, respectively \cite{5,BL2}. Otherwise, it is called non-bilocal. 

In order to detect non-bilocal correlations generated by the network, it is often necessary to find suitable measurements that violate the following bilocal inequality
\begin{equation}\label{bil} \mathcal S\equiv\sqrt{|I|}+\sqrt{|J|}\leq 2, \end{equation}
whose maximum quantum violation is $2\sqrt{2}$ and is attainable. Here
\begin{equation*}
I\equiv\sum_{x,z}\langle A_xB_0C_z\rangle=\langle(A_0+A_1)B_0(C_0+C_1)\rangle,
\end{equation*}
\begin{equation*}
J\equiv\sum_{x,z}(-1)^{x+z}\langle A_xB_1C_z\rangle=\langle(A_0-A_1)B_1(C_0-C_1)\rangle
\end{equation*}
as introduced in Ref.\,\cite{nc5}:
{\small
\begin{eqnarray*}\langle A_xB_yC_z\rangle&=&\sum_{a,b,c=0}^1(-1)^{a+b+c}
{\rm tr}((A_{a|x}B_{b|y}C_{x|z})\rho_{AB}\otimes\rho_{BC})\\
&=&
\sum_{a,b,c=0}^1(-1)^{a+b+c}p(abc|xyz).\end{eqnarray*}} 

\noindent
Here
$A_{x}=\sum_{a}(-1)^aA_{a|x}$, 
$B_{y}=\sum_{b}(-1)^bB_{b|y}$
and
$C_{z}=\sum_{z}(-1)^zC_{c|z}$, where $A_{a|x},\ B_{b|y}$, and $C_{c|z}$ are the positive operator-valued measurements (POVMs) performed by Alice, Bob and Charles, respectively.

{\it Mutually-commuting von Neumann algebra models.}  In QFT, the observables for Alice, Bob, and Charles are associated with three mutually-commuting von Neumann algebras $\mathcal M_\mathcal A$, $\mathcal M_\mathcal B$, $\mathcal M_\mathcal C$. Therefore, our model encompasses both the non-relativistic quantum mechanics scenario and the quantum field theory scenario.
The idea of this model is similar to that in Refs. \cite{LLD,XuXiang}.

\begin{definition}\label{Def1} 
{\bf (Ternary Mutually-commuting von Neumann Algebra Models of Tripartite Quantum Systems)} Let $\mathcal M_\mathcal A, \mathcal M_\mathcal B, \mathcal M_\mathcal C$ be von Neumann subalgebras of $\mathcal B(\mathcal H)$ over some Hilbert space $\mathcal H$, which are mutually commuting, i.e., $\mathcal M_i\subset \mathcal M_j^\prime $ with $i\neq j\in \{\mathcal A, \mathcal B, \mathcal C\}$, where $\mathcal M_j^\prime$ is the communtant of $\mathcal M_j$. The generated von Neumann algebra $$\mathcal M_\mathcal{ABC}=(\mathcal M_\mathcal A \vee \mathcal M_\mathcal B \vee \mathcal M_\mathcal C)^{\prime\prime}.$$
We refer to the above model as the {\bf TMCvNA} model. When $\mathcal M_\mathcal{ABC}\simeq\mathcal M_\mathcal A \otimes \mathcal M_\mathcal B \otimes \mathcal M_\mathcal C$,
it is called the {\bf tensor product algebra model}. In this paper, for any $A \in \mathcal M_\mathcal A$, $B \in \mathcal M_\mathcal B$, $C \in \mathcal M_\mathcal C$, we always assume that they are Hermitian.
\end{definition}

We intend to use the above model of ternary mutually commuting von Neumann algebras to describe the entanglement swapping network in Fig.\,\ref{bi}. We note that there is no correlation between the parties Alice and Charles in the network. Mathematically, this independence can be described by the following formula: a network state $\tau$ of the entanglement swapping network should be a state in $\mathcal{M}_\mathcal{ABC}^*$, the dual space of $\mathcal{M}_\mathcal{ABC}$, satisfying   $$\tau(AC)=\tau(A)\tau(C) \eqno(*)$$ for any $A\in\mathcal M_\mathcal A, C\in\mathcal M_\mathcal C$. We call ($*$)  the independent condition. This assumption will be used throughout this paper.

\begin{definition}\label{Def2}  The ternary mutually-commuting von Neumann algebra model of entanglement swapping networks is the ternary mutually-commuting von Neumann algebra model of tripartite quantum systems with all states satisfying the independence condition ($*$).
\end{definition}

\section{Bilocal inequalities and their bounds}
%the network correlation  $\hat{p}=p(\alpha \beta \gamma|xyz)$ is defined as
%$$p(\alpha \beta \gamma|xyz)=\tau({A}_{\alpha|x}{B}_{\beta|y}{C}_{\gamma|z}),$$
%where  the measurement operators ${A}_{\alpha|x}\in \mathcal M_A,\ {B}_{\beta|y}\in\mathcal M_B,\ {C}_{\gamma|z}\in\mathcal M_C$, $\{x,y,z\}$ and $\{\alpha, \beta, \gamma\}$ denote the inputs and outputs of measurement operators with $x,y,z,\alpha, \beta, \gamma\in\{0,1\}$.
%Let $a_x \in \mathcal M_A$, $b_y \in \mathcal M_B$, $c_z \in \mathcal M_C$ be operators such that $-I \le a_x, b_y, c_z \le I$ for any $x, y, z \in \{0,1\}$, and $a_x=\sum_{\alpha=0}^1(-1)^{\alpha}A_{\alpha|x}, b_y=\sum_{\beta=0}^1(-1)^{\beta}B_{\beta|y}, c_z=\sum_{\gamma=0}^1(-1)^{\gamma}C_{\gamma|z}$.
%Then denote $I_\tau=\tau((a_0+a_1)b_0(c_0+c_1)),\ J_\tau=\tau((a_0-a_1)b_1(c_0-c_1))$. In the TMCvNA model, analogous to Ineq. (\ref{bil}), we define
%\begin{eqnarray}\label{biin}
%  \mathcal S_{\tau}= \sqrt{|I_\tau|}+\sqrt{|J_\tau|}.
%\end{eqnarray}
%Analogous to Ineq. (\ref{g2.3}), under the same assumption, we have the following
%\begin{eqnarray}\label{biin2}
 % \mathcal S_\tau^\prime =  |K_\tau|+|L_\tau|.
%\end{eqnarray}
%where,  $K_\tau=\tau(a_0(b_0+b_1)c_0),\ L_\tau=\tau(a_1(b_0-b_1)c_1)$.\\
In this section, we further analyze the conditions under which the bilocal inequality holds or is violated in the TMCvNA model of an entanglement swapping network. Specifically, in this model, we can construct the bilocal inequality analogous to that in the non-relativistic setting. Let
$$
I_\tau = \tau\bigl((A_0 + A_1) B_0 (C_0 + C_1)\bigr), $$ $$
J_\tau = \tau\bigl((A_0 - A_1) B_1 (C_0 - C_1)\bigr),
$$
where $\tau$ is a state on $\mathcal{M}_\mathcal{ABC}$ satisfying the independent condition ($*$). 
Here, $\tau\bigl(A_xB_yC_z\bigr)=\sum_{a,b,c=0}^1(-1)^{a+b+c}\tau({A}_{a|x}{B}_{b|y}{C}_{c|z})$ and $A_{x}=\sum_{a}(-1)^aA_{a|x}$, 
$B_{y}=\sum_{b}(-1)^bB_{b|y}$,
$C_{z}=\sum_{z}(-1)^zC_{c|z}$,  where $A_{a|x}$, $B_{b|y}$, and $C_{c|z}$ are the POVMs performed by Alice, Bob, and Charles, respectively. Moreover, the network correlation $\hat{p}=p(\alpha \beta \gamma|xyz)$ in the TMCvNA model is defined as
$$p(\alpha \beta \gamma|xyz)=\tau({A}_{\alpha|x}{B}_{\beta|y}{C}_{\gamma|z}),$$
In the TMCvNA model, analogous to Ineq.\,(\ref{bil}), we set
\begin{equation}\label{biin}
\mathcal S_{\tau} = \sqrt{|I_\tau|} + \sqrt{|J_\tau|}.
\end{equation}

We say that the state $\tau$ together with the observables $A_x$, $B_y$, $C_z$ satisfies the bilocal  inequality if $\mathcal{S}_\tau\le2$, and violates it if $\mathcal{S}_\tau>2$.

The following conclusion indicates that in  the TMCvNA of entanglement swapping networks, the supremum of $\mathcal{S}_\tau$ defined in Eq.\,(\ref{biin}) is $2\sqrt{2}$. This coincides with the case in  non-relativistic quantum mechanics, where the bilocal quantity $\mathcal{S}$ in Ineq.\,(\ref{bil}) attains a maximal violation of  $2\sqrt{2}$ allowed by quantum resources.

\begin{theorem}\label{Thm1}
 For the ternary mutually-commuting von Neumann algebra models of entanglement swapping networks, we always have $\mathcal{S}_\tau=\sqrt{|I_\tau|}+\sqrt{|J_\tau|}\le2\sqrt{2}.$
\end{theorem}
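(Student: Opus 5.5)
The plan is to reduce the trilinear expressions $I_\tau$ and $J_\tau$ to products of single-party quantities, using the commutation relations of the TMCvNA model and the independence condition $(*)$, and then to apply two Cauchy--Schwarz-type estimates. Write
$$X_\pm=A_0\pm A_1\in\mathcal M_\mathcal A,\qquad Y_\pm=C_0\pm C_1\in\mathcal M_\mathcal C .$$
These are Hermitian and commute with each other. The observables $B_0,B_1\in\mathcal M_\mathcal B$ commute with both, and satisfy $-I\le B_y\le I$.

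\textbf{Step 1: remove Bob.} Put $Z=X_+Y_+$, which is self-adjoint because $X_+$ and $Y_+$ commute, so that $I_\tau=\tau(B_0Z)$. Let $Z=Z_+-Z_-$ be its Jordan decomposition. Then $Z_\pm$ lie in the abelian von Neumann algebra generated by $Z$, and hence commute with $B_0$. Consequently
$$Z_\pm^{1/2}(I\pm B_0)Z_\pm^{1/2}=(I\pm B_0)Z_\pm\ge 0,$$
which gives $|\tau(B_0Z_\pm)|\le\tau(Z_\pm)$. Adding the two estimates yields $|I_\tau|\le\tau(|Z|)$. Because $X_+$ and $Y_+$ commute, $|Z|=|X_+|\,|Y_+|$. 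Moreover $|X_+|\in\mathcal M_\mathcal A$ and $|Y_+|\in\mathcal M_\mathcal C$ by the functional calculus. The independence condition $(*)$ therefore gives
$$|I_\tau|\le \alpha_+\gamma_+,\qquad \alpha_\pm:=\tau(|X_\pm|),\quad \gamma_\pm:=\tau(|Y_\pm|).$$
The same argument with $B_1$ gives $|J_\tau|\le\alpha_-\gamma_-$.

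\textbf{Step 2: combine.} By the Cauchy--Schwarz inequality in $\mathbb R^2$,
$$\mathcal S_\tau\le\sqrt{\alpha_+\gamma_+}+\sqrt{\alpha_-\gamma_-}\le\sqrt{(\alpha_++\alpha_-)(\gamma_++\gamma_-)}.$$
So it suffices to prove $\alpha_++\alpha_-\le 2\sqrt2$, and likewise for $\gamma_\pm$.

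\textbf{Step 3: single-party bound.} For a state and a positive operator $T$, the Cauchy--Schwarz inequality for states gives $\tau(T)^2\le\tau(T^2)$. Hence
$$\alpha_++\alpha_-\le\sqrt{2(\alpha_+^2+\alpha_-^2)}\le\sqrt{2\,\tau\bigl(X_+^2+X_-^2\bigr)}.$$
Since $X_+^2+X_-^2=2A_0^2+2A_1^2\le 4I$, this is at most $\sqrt{8}=2\sqrt2$. The same holds for $\gamma_\pm$, and substituting into Step 2 gives $\mathcal S_\tau\le 2\sqrt2$.

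The only delicate point is Step 1. There one must justify the inequality $|\tau(B_0Z)|\le\tau(|Z|)$ in a general von Neumann algebra, with no trace and no tensor structure available. The argument relies on $B_0$ commuting with the spectral parts $Z_\pm$, which follows because $B_0$ commutes with $Z$ and hence with its functional calculus. One must also check that $|X_+Y_+|=|X_+||Y_+|$ for commuting self-adjoint operators. Once this is settled, independence applies directly, since $|X_\pm|\in\mathcal M_\mathcal A$ and $|Y_\pm|\in\mathcal M_\mathcal C$.
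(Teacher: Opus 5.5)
Your proof is correct, but it differs from the paper's at the key step of eliminating Bob.

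\textbf{The paper's route.} The paper first applies $\sqrt{|I_\tau|}+\sqrt{|J_\tau|}\le\sqrt{2(|I_\tau|+|J_\tau|)}$. It then uses Cauchy--Schwarz in the GNS space, $|\tau(B_0Z)|\le\|\pi_\tau(B_0)\Omega\|\,\|\pi_\tau(Z)\Omega\|\le\tau(X_+^2Y_+^2)^{1/2}$. Together with $(*)$, which the paper uses only implicitly, this gives the second-moment bound $|I_\tau|\le(\tau(X_+^2)\tau(Y_+^2))^{1/2}$. Cauchy--Schwarz in $\mathbb R^2$ and $X_+^2+X_-^2=2(A_0^2+A_1^2)\le 4I$ then finish.

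\textbf{Your route.} You use an order argument instead, based on $B_0$ commuting with the spectral parts of $Z$. This gives the first-moment bound $|I_\tau|\le\tau(|X_+|)\,\tau(|Y_+|)$. You pass to second moments only at the end, via $\tau(T)^2\le\tau(T^2)$.

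\textbf{What your route buys.} Your intermediate bound carries strictly more information. The paper's quantity $\tau(X_+^2)+\tau(X_-^2)=2\tau(A_0^2+A_1^2)$ cannot detect whether $A_0$ and $A_1$ commute, whereas your $\alpha_++\alpha_-$ can. If $\mathcal M_\mathcal A$ is abelian, then $|A_0+A_1|+|A_0-A_1|=2\max(|A_0|,|A_1|)\le 2I$, so $\alpha_++\alpha_-\le 2$. Your Step 2 then gives Theorem~\ref{Thm2} at once when both outer algebras are abelian. When only one of them is abelian, it gives $\mathcal S_\tau\le 2^{5/4}$. The paper, by contrast, needs the separate decomposition into the positive elements $A_{\epsilon_0\epsilon_1}$, $C_{\epsilon_0\epsilon_1}$ to prove Theorem~\ref{Thm2}.

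\textbf{What the paper's route buys.} Its equality cases can be read off directly and are reused in the proof of Theorem~\ref{Thm5}. These include collinearity of $\pi_\tau(B_y)\Omega$ with $\pi_\tau(X_\pm Y_\pm)\Omega$, $\|\pi_\tau(B_y)\Omega\|=1$, and $A_0^2+A_1^2=2I$. Characterizing maximal violation through your chain of inequalities would require a different equality analysis.
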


\begin{prof}
According to the Gelfand-Namark-Segal (GNS) construction, there is a $^*$-representation $\pi_\tau: \mathcal M_\mathcal{ABC}\rightarrow \mathcal B(\mathcal H_\tau)$ and a cyclic vector $\Omega\in \mathcal H_\tau$ such that the set $\{\pi_\tau(O)\Omega:\  O\in \mathcal M_\mathcal{ABC}\}$ is dense in $\mathcal H_\tau$.
It follows by applying the Cauchy-Schwarz inequality that
\begin{flalign*}
&\mathcal{S}_\tau=\sqrt{|I_\tau|}+\sqrt{|J_\tau|}\\
&=\sqrt{|\tau((A_0+A_1)B_0(C_0+C_1))|}+\sqrt{|\tau((A_0-A_1)B_1(C_0-C_1))|}\\
&\le \sqrt{2}\sqrt{|\tau(B_0(A_0+A_1)(C_0+C_1))|+|\tau(B_1(A_0-A_1)(C_0-C_1))|}\\
&= \sqrt{2}\sqrt{|\<\pi_\tau(B_0)\Omega,\pi_\tau((A_0+A_1)(C_0+C_1))\Omega\>|
+|\<\pi_\tau(B_1)\Omega,\pi_\tau((A_0-A_1)(C_0-C_1))\Omega\>|}\\
&\le \sqrt{2}\sqrt{\|\pi_\tau(B_0)\Omega\|\|\pi_\tau((A_0+A_1)(C_0+C_1))\Omega\|
+\|\pi_\tau(B_1)\Omega\|\|\pi_\tau((A_0-A_1)(C_0-C_1))\Omega\|}\\
&\le \sqrt{2}\sqrt{\sqrt{\tau((A_0+A_1)^2(C_0+C_1)^2)}+\sqrt{\tau((A_0-A_1)^2(C_0-C_1)^2)}}\\
&\le \sqrt{2}\sqrt{\sqrt{\tau(A_0+A_1)^2+\tau(A_0-A_1)^2}\sqrt{\tau(C_0+C_1)^2+\tau(C_0-C_1)^2}}\\
&= \sqrt{2}\sqrt{\sqrt{2\tau(A_0^2+A_1^2)}\sqrt{2\tau(C_0^2+C_1^2)}}\\
&\le \sqrt{2}\sqrt{2\sqrt{2}\sqrt{2}}\\
&=2\sqrt{2}.
\end{flalign*}
The final inequality invokes the condition that $-I\le A_i\le I,\ -I\le C_i\le I$ and the positivity property of $\tau$. 
\end{prof}

%We show the proof in Appendix~\ref{sec-Thm1}.
Building on the results above, we now investigate how the quantity $\mathcal S_\tau$ in Eq.\,(\ref{biin}) depends on the abelianness of the algebras $\mathcal M_\mathcal A$, $\mathcal M_\mathcal B$, and $\mathcal M_\mathcal C$. Specifically, the results indicate that in entanglement swapping networks, the abelianness of the three algebras plays distinct roles in reducing the upper bound of the inequality to 2, i.e., determining the conditions under which no violation of the bilocal inequality can occur.  This is not a simple generalization of the bipartite Bell scenario \cite{SR}, where, with only two systems, Summers et al. showed that if one of these two algebras is abelian, the upper bound of the Bell inequality is 2.

\begin{theorem}\label{Thm2} For the ternary mutually-commuting von Neumann algebra models of entanglement swapping networks, if $\mathcal{M}_\mathcal{A}$ and $\mathcal{M}_\mathcal{C}$ 
are  Abelian, then $$\mathcal{S}_\tau=\sqrt{|I_\tau|}+\sqrt{|J_\tau|}\le 2.$$
\end{theorem}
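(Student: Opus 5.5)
Abelianness of $\mathcal M_\mathcal A$ and $\mathcal M_\mathcal C$ should let us pass to the commutative ($C^*$) picture, where Alice's and Charles' observables become functions, and then argue as in the classical bilocal bound. Since $B_0,B_1\in\mathcal M_\mathcal B$ commute with $\mathcal M_\mathcal A\vee\mathcal M_\mathcal C$, the plan is to estimate $I_\tau$ and $J_\tau$ by conditioning on the joint spectrum of $(A_0,A_1,C_0,C_1)$. First, a reduction: $-I\le B_y\le I$ holds, and because $B_y$ commutes with the positive operator $P=(A_0\pm A_1)\otimes$-type products, we expect $|\tau(XB_yY)|\le\tau(|X||Y|)$ whenever $X\in\mathcal M_\mathcal A$, $Y\in\mathcal M_\mathcal C$ are Hermitian. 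The reason is that $X$ and $Y$ commute with each other and with $B_y$, so $XY$ is Hermitian with polar decomposition $XY=U|XY|$ inside the abelian algebra $(\mathcal M_\mathcal A\vee\mathcal M_\mathcal C)''$. Then $\tau(B_yXY)=\tau(|XY|^{1/2}B_yU|XY|^{1/2})$, and since $B_yU$ is a contraction, its absolute value is bounded by $\tau(|XY|)=\tau(|X||Y|)$. Here commutativity of $\mathcal M_\mathcal A$ and $\mathcal M_\mathcal C$ is used to make $|X|$ and $|Y|$ well behaved, since $X=A_0\pm A_1$ is a function of the commuting pair $A_0,A_1$.

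Second, I would apply the independence condition $(*)$ to $|X|\in\mathcal M_\mathcal A$ and $|Y|\in\mathcal M_\mathcal C$:
\[
|I_\tau|\le\tau(|A_0+A_1|)\,\tau(|C_0+C_1|),\qquad |J_\tau|\le\tau(|A_0-A_1|)\,\tau(|C_0-C_1|).
\]
This is exactly where both abelianness of $\mathcal M_\mathcal A$ and $\mathcal M_\mathcal C$ and the factorization $(*)$ enter. Abelianness guarantees that $|A_0\pm A_1|$ are computed in a commutative algebra, so the following pointwise inequality is available.

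Third, Cauchy--Schwarz for sums gives
\[
\mathcal S_\tau\le\sqrt{\bigl(\tau(|A_0+A_1|)+\tau(|A_0-A_1|)\bigr)\bigl(\tau(|C_0+C_1|)+\tau(|C_0-C_1|)\bigr)}.
\]
In the commutative $C^*$-algebra generated by $A_0,A_1$ (Gelfand representation), with $|a_0|,|a_1|\le1$, we have pointwise $|a_0+a_1|+|a_0-a_1|=2\max(|a_0|,|a_1|)\le2$. Hence $|A_0+A_1|+|A_0-A_1|\le 2I$ as operators, and positivity of $\tau$ gives $\tau(\cdot)\le2$. The same holds for $C$, yielding $\mathcal S_\tau\le\sqrt{2\cdot2}=2$.

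The main obstacle is the first step: justifying $|\tau(B_yXY)|\le\tau(|X||Y|)$ for a general (non-normal, non-tracial) state, where $B_y$ is not in the abelian algebra. I would handle it via the GNS representation, writing $\tau(B_yU|XY|)=\langle \pi(B_yU)\pi(|XY|^{1/2})\Omega,\pi(|XY|^{1/2})\Omega\rangle$. This uses that $|XY|^{1/2}$ commutes with $B_yU$, which holds because $\mathcal M_\mathcal B$ commutes with both $\mathcal M_\mathcal A$ and $\mathcal M_\mathcal C$, and $U$ lies in the von Neumann algebra generated by them. One must check that $U$, a Borel function of $XY$, lies in $(\mathcal M_\mathcal A\vee\mathcal M_\mathcal C)''$; this holds since it is a von Neumann algebra. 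Equivalently, one can write $XY=(XY)_+-(XY)_-$ and bound each part separately, avoiding the polar decomposition.
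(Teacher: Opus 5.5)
Your proof is correct. It shares the paper's skeleton: remove $B_y$ using $-I\le B_y\le I$ and positivity of $\tau$, factor with $(*)$, apply $\sqrt{ab}+\sqrt{cd}\le\sqrt{(a+c)(b+d)}$, and finish with a normalization that uses commutativity. What differs is how you split $A_0\pm A_1$ into positive parts.

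The paper uses the four-outcome partition of unity $A_{\epsilon_0\epsilon_1}=\tfrac14(I+\epsilon_0A_0)(I+\epsilon_1A_1)$. It writes $A_0+A_1=2(A_{++}-A_{--})$ and $A_0-A_1=2(A_{+-}-A_{-+})$, and likewise for $C$. Each of the four resulting terms is bounded by $|\tau(PB_yQ)|\le\tau(PQ)$ for positive $P\in\mathcal M_\mathcal A$, $Q\in\mathcal M_\mathcal C$, and the proof closes with $\sum_{\epsilon_0,\epsilon_1}A_{\epsilon_0\epsilon_1}=I$. This is the operator form of the classical joint distribution of $(a_0,a_1)$. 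It is very elementary: only products of commuting positive elements, with no polar decomposition, Borel calculus or Gelfand transform.

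You use $|A_0\pm A_1|$ and the identity $|x+y|+|x-y|=2\max(|x|,|y|)$ instead. Since $2(A_{++}+A_{--})=I+A_0A_1\ge|A_0+A_1|$ in the commuting case, your intermediate bound is slightly sharper.

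The more useful gain is structural. Your Steps 1 and 2 use no abelianness at all, so your remarks that abelianness enters there are unnecessary. $|X|$ and $|Y|$ exist for any Hermitian $X,Y$, and both $|XY|=|X||Y|$ and $U\in W^*(XY)\subset(\mathcal M_\mathcal A\vee\mathcal M_\mathcal C)''$ need only $XY=YX$, which mutual commutativity gives. So for arbitrary $\mathcal M_\mathcal A,\mathcal M_\mathcal C$ you get
\[
\mathcal S_\tau\le\bigl(\tau(|A_0+A_1|+|A_0-A_1|)\,\tau(|C_0+C_1|+|C_0-C_1|)\bigr)^{1/2}.
\]
Commutativity is used only for $|A_0+A_1|+|A_0-A_1|\le 2I$, which genuinely fails for non-commuting pairs; for $\sigma_z,\sigma_x$ the sum is $2\sqrt2\,I$.

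Combined with $\tau(|X|)\le\tau(X^2)^{1/2}$, your bound recovers Theorem~\ref{Thm1}. It also gives $\mathcal S_\tau\le 2^{5/4}$ when only one of $\mathcal M_\mathcal A,\mathcal M_\mathcal C$ is abelian, and that value is attained:
\begin{enumerate}
\item a classical source reveals to Bob the joint outcome of Alice's commuting pair;
\item Bob and Charles share a Bell pair, with $C_0=\sigma_z$ and $C_1=\sigma_x$;
\item Bob measures $\pm(\sigma_z+\sigma_x)/\sqrt2$ for $y=0$ and $\pm(\sigma_z-\sigma_x)/\sqrt2$ for $y=1$, with signs read off that classical information.
\end{enumerate}
This gives $|I_\tau|=|J_\tau|=\sqrt2$, which shows concretely why the theorem needs both algebras abelian.
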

\begin{prof}
Since $\mathcal M_\mathcal A$ and $\mathcal M_\mathcal C$ are Abelian, respectively, the eight elements
$$A_{\epsilon_0\epsilon_1}\equiv\frac{1}{4}(1+\epsilon_0 A_0)(1+\epsilon_1 A_1),\ \ \ C_{\epsilon_0\epsilon_1}\equiv\frac{1}{4}(1+\epsilon_1 C_0)(1+\epsilon_1 C_1)$$
with $\epsilon_0,\epsilon_1\in\{+,-\}$ are positive. By direct computation, one obtains that$$A_0+A_1=2(A_{++}-A_{--}),\ C_0+C_1=2(C_{++}-C_{--}),$$ $$A_0-A_1=2(A_{+-}-A_{-+}),\ C_0-C_1=2(C_{+-}-C_{-+}).$$ So one obtains that
%\begin{eqnarray*}
%&&\sqrt{|I_\tau|}+\sqrt{|J_\tau|}=\sqrt{|\tau((a_0+a_1)b_0(c_0+c_1))|}+\sqrt{|\tau((a_0-a_1)b_1(c_0-c_1))|}\\
%&=&2\left(\sqrt{|\tau((a_{++}-a_{--})b_0(c_{++}-c_{--}))|}+\sqrt{|\tau((a_{+-}-a_{-+})b_1(c_{+-}-c_{-+}))|}\right)\\
%&=&2\left(\sqrt{|\tau(a_{++}b_0c_{++})-\tau(a_{++}b_0c_{--})-\tau(a_{--}b_0c_{++})+\tau(a_{--}b_0c_{--})|}\right.\\
%&&\left.+\sqrt{|\tau(a_{+-}b_1c_{+-})-\tau(a_{+-}b_1c_{-+})-\tau(a_{-+}b_{1}c_{+-})+\tau(a_{-+}b_1c_{-+})|}\right)\\
%&\le& 2\left(\sqrt{|\tau(a_{++}c_{++})+\tau(a_{++}c_{--})+\tau(a_{--}c_{++})+\tau(a_{--}c_{--})|}\right.\\
%&&\left.+\sqrt{|\tau(a_{+-}c_{+-})+\tau(a_{+-}c_{-+})+\tau(a_{-+}c_{+-})+\tau(a_{-+}c_{-+})|}\right)\\
%&=&2\left(\sqrt{|\tau(a_{++}+a_{--})||\tau(c_{++}+c_{--})|}
%+\sqrt{|\tau(a_{+-}+a_{-+})||\tau(c_{+-}+c_{-+})|}\right)\\
%&\le&2\sqrt{\tau(a_{++}+a_{--})+\tau(a_{+-}+a_{-+})}\sqrt{\tau(c_{++}+c_{--})+\tau(c_{+-}c_{-+})}\\
%&=&2\sqrt{1}\sqrt{1}=2£¬
%\end{eqnarray*}
\begin{flalign*}
&\mathcal{S}_\tau=\sqrt{|I_\tau|}+\sqrt{|J_\tau|}=\sqrt{|\tau((A_0+A_1)B_0(C_0+C_1))|}+\sqrt{|\tau((A_0-A_1)B_1(C_0-C_1))|}&\\
&=2\left(\sqrt{|\tau((A_{++}-A_{--})B_0(C_{++}-C_{--}))|}+\sqrt{|\tau((A_{+-}-A_{-+})B_1(C_{+-}-C_{-+}))|}\right)&\\
&=2\left(\sqrt{|\tau(A_{++}B_0C_{++})-\tau(A_{++}B_0C_{--})-\tau(A_{--}B_0C_{++})+\tau(A_{--}B_0C_{--})|}\right.&\\
&\qquad\left.+\sqrt{|\tau(A_{+-}B_1C_{+-})-\tau(A_{+-}B_1C_{-+})-\tau(A_{-+}B_{1}C_{+-})+\tau(A_{-+}B_1C_{-+})|}\right)&\\
&\le 2\left(\sqrt{|\tau(A_{++}C_{++})+\tau(A_{++}C_{--})+\tau(A_{--}C_{++})+\tau(A_{--}C_{--})|}\right.&\\
&\qquad\left.+\sqrt{|\tau(A_{+-}C_{+-})+\tau(A_{+-}C_{-+})+\tau(A_{-+}C_{+-})+\tau(A_{-+}C_{-+})|}\right)&\\
&=2\left(\sqrt{|\tau(A_{++}+A_{--})||\tau(C_{++}+C_{--})|}
+\sqrt{|\tau(A_{+-}+A_{-+})||\tau(C_{+-}+C_{-+})|}\right)&\\
&\le2\sqrt{\tau(A_{++}+A_{--})+\tau(A_{+-}+A_{-+})}\sqrt{\tau(C_{++}+C_{--})+\tau(C_{+-}C_{-+})}&\\
&=2\sqrt{1}\sqrt{1}=2,
\end{flalign*}
where the first inequality follows the fact that $-I\le B_i\le I\ (i=0,1)$ and the order-preserving of state $\tau$, and the second inequality holds because of the Cauchy-Schwarz inequality and the non-negativeness of $A_{\epsilon_0\epsilon_1} \ \text{and}\ \ C_{\epsilon_0\epsilon_1}$. 
\end{prof}

%Its proof is presented in Appendix~\ref{sec-Thm2}.

The above theorem illustrates a phenomenon: the violation of the Bell-type inequality, i.e., $2<\mathcal{S}_\tau\le2\sqrt{2}$ can serve as an indicator of the non-abelianness of the underlying algebras. Applying the theorem, we can infer from a violation of the inequality in Theorem \ref{Thm2} that at least one of the algebras $\mathcal{M_A}$ and $\mathcal{M_C}$ is non-abelian.

In the following, we define a quantity 
$$\mathcal S(\tau, \mathcal M_\mathcal A, \mathcal M_\mathcal B, \mathcal M_\mathcal C)=\sup_{\{A_x, B_y, C_z\}}(\sqrt{|I_\tau|}+\sqrt{|J_\tau|}).$$

Combining Theorems \ref{Thm1} and \ref{Thm2}, one naturally obtains the following corollary.
%of which a proof is given in Appendix~\ref{sec-Coro4}. 

\begin{corollary}\label{Coro4} In the ternary mutually-commuting von Neumann algebra models of entanglement swapping networks,

{\rm (1)} for any state $\tau$ and any choice of observables $A_x\in\mathcal{M}_\mathcal A,\ B_y\in\mathcal{M}_\mathcal B,\ C_z\in\mathcal{M}_\mathcal C$ in a scheme with two inputs and two outputs, the quantity $\mathcal{S}(\tau,\mathcal{M}_A,\mathcal{M}_B,\mathcal{M}_C)$ satisfies $$2\le \mathcal S(\tau, \mathcal M_\mathcal A, \mathcal M_\mathcal B, \mathcal M_\mathcal C)\le 2\sqrt{2}.$$

{\rm (2)} if $\mathcal M_\mathcal A$ and $\mathcal M_\mathcal C$ are Abelian, then $\mathcal S(\tau, \mathcal M_\mathcal A, \mathcal M_\mathcal B, \mathcal M_\mathcal C)=2$.\\

{\rm (3)} for any states $\phi,\psi\in[(\mathcal{M}_\mathcal A\vee\mathcal{M}_\mathcal B\vee\mathcal{M}_\mathcal C)'']^*$, the following inequality holds: $$|\mathcal S(\phi, \mathcal M_\mathcal A, \mathcal M_\mathcal B, \mathcal M_\mathcal C)-\mathcal S(\psi, \mathcal M_\mathcal A, \mathcal M_\mathcal B, \mathcal M_\mathcal C)|\le k\sqrt{\|\phi-\psi\|},$$ where $k$ is a positive constant. Consequently, the functional $\phi\mapsto \mathcal{S}(\phi,\mathcal{M}_\mathcal A,\mathcal{M}_\mathcal B,\mathcal{M}_\mathcal C)=\displaystyle\sup_{\{A_x, B_y, C_z\}}(\sqrt{|I_\phi|}+\sqrt{|J_\phi|})$ is norm continuous.
\end{corollary}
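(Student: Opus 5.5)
Part (1) splits into an upper and a lower bound. The upper bound $\mathcal S(\tau,\mathcal M_\mathcal A,\mathcal M_\mathcal B,\mathcal M_\mathcal C)\le 2\sqrt2$ is immediate: Theorem \ref{Thm1} bounds $\sqrt{|I_\tau|}+\sqrt{|J_\tau|}$ by $2\sqrt2$ for every admissible choice of observables, so the supremum obeys the same bound.

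For the lower bound $\ge 2$, exhibit one choice of observables achieving the value $2$ for every $\tau$. Take $A_0=A_1=I\in\mathcal M_\mathcal A$, $C_0=C_1=I\in\mathcal M_\mathcal C$ and $B_0=B_1=I\in\mathcal M_\mathcal B$. All are Hermitian with $-I\le\cdot\le I$ (using that von Neumann algebras are unital). Then $I_\tau=\tau(2I\cdot I\cdot 2I)=4$ and $J_\tau=0$, so $\mathcal S_\tau=2$. If one insists on spectra exactly $\{-1,1\}$, the scalar choice $\pm I$ still has spectrum in $\{-1,1\}$, which is the only convention the paper uses.

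Part (2) combines (1) with Theorem \ref{Thm2}. When $\mathcal M_\mathcal A,\mathcal M_\mathcal C$ are abelian, each $\mathcal S_\tau\le 2$, so the supremum is at most $2$. The lower bound from (1) then forces equality.

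For (3), fix observables and compare $\sqrt{|I_\phi|}$ with $\sqrt{|I_\psi|}$. Use the elementary inequality $|\sqrt a-\sqrt b|\le\sqrt{|a-b|}$ for $a,b\ge0$ together with $\bigl||x|-|y|\bigr|\le|x-y|$. This gives
\begin{equation*}
\bigl|\sqrt{|I_\phi|}-\sqrt{|I_\psi|}\bigr|\le\sqrt{|(\phi-\psi)((A_0+A_1)B_0(C_0+C_1))|}\le\sqrt{4\|\phi-\psi\|},
\end{equation*}
since $\|(A_0+A_1)B_0(C_0+C_1)\|\le 4$. The same bound holds for $J$. Hence for each fixed choice of observables, $\mathcal S_\phi\le\mathcal S_\psi+4\sqrt{\|\phi-\psi\|}$. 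Taking suprema over observables on both sides, first on the right and then on the left, and symmetrizing gives the claim with $k=4$. Norm continuity follows at once.

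The only subtle step is passing the estimate through the supremum. This works because the bound is uniform in the observables. One should also note that the network condition ($*$) plays no role here: both the estimate and the supremum are taken pointwise in the state. So the inequality holds for arbitrary states $\phi,\psi$, as stated, and no further restriction on the states is needed.
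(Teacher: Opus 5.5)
Your proposal is correct and follows the paper's proof essentially step for step. The ingredients match: identity observables give the lower bound $2$ in (1); Theorems \ref{Thm1} and \ref{Thm2} give the upper bounds in (1) and (2); and for (3) you combine $|\sqrt{a}-\sqrt{b}|\le\sqrt{|a-b|}$ with $|(\phi-\psi)(X)|\le\|\phi-\psi\|\,\|X\|$, uniformly in the observables. Your version additionally makes the constant explicit ($k=4$) and pairs the $J$-terms correctly, whereas the paper's displayed chain contains a typo that compares $I_\phi$ with $J_\psi$.
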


\begin{prof}
To show (1),  note that  $\sqrt{|I_\tau|}+\sqrt{|J_\tau|}=2$ when $A_0=A_1=B_0=C_0=C_1=I$,  and combining this with the proof of Theorem \ref{Thm1}, we obtain (1).

(2) holds by the fact that $\sqrt{|I_\tau|}+\sqrt{|J_\tau|}=2$ when $A_0=A_1=B_0=C_0=C_1=I$, and by Theorem \ref{Thm2}.

To prove (3),  note that by the representations of $I_\tau$ and $J_\tau$,  together with the facts that $|\sup x-\sup y|\le\sup|x-y|$ and the triangle inequality, one can obtain 
%\begin{eqnarray*}
%&&|\displaystyle\sup_T(\sqrt{|I_\phi|}+\sqrt{|J_\phi|})-\displaystyle\sup_T(\sqrt{|I_\psi|}+\sqrt{|J_\psi|})|\\
%&=&|\displaystyle\sup_T(\sqrt{|\phi((a_0+a_1)b_0(c_0+c_1))|}+\sqrt{|\phi((a_0-a_1)b_1(c_0-c_1))|})\\
%&&-\displaystyle\sup_T(\sqrt{|\psi((a_0+a_1)b_0(c_0+c_1))|}+\sqrt{|\psi((a_0-a_1)b_1(c_0-c_1))|})|\\
%&\le&\displaystyle\sup_T|\sqrt{|\phi((a_0+a_1)b_0(c_0+c_1))|}+\sqrt{|\phi((a_0-a_1)b_1(c_0-c_1))|}\\
%&&-\sqrt{|\psi((a_0+a_1)b_0(c_0+c_1))|}-\sqrt{|\psi((a_0-a_1)b_1(c_0-c_1))|}|\\
%&\le&\displaystyle\sup_T(|\sqrt{|\phi((a_0+a_1)b_0(c_0+c_1))|}-\sqrt{|\psi((a_0+a_1)b_0(c_0+c_1))|}|\\
%&&+|\displaystyle\sqrt{|\phi((a_0+a_1)b_0(c_0+c_1))|}-\sqrt{|\psi((a_0-a_1)b_1(c_0-c_1))|}|)\\
%&\le&\displaystyle\sup_T(\sqrt{|(\phi-\psi)((a_0+a_1)b_0(c_0+c_1))|}+\sqrt{|(\phi-\psi)((a_0-a_1)b_1(c_0-c_1))|})\\
%&\le&\displaystyle\sup_T(\sqrt{\|\phi-\psi\|\|(a_0+a_1)b_0(c_0+c_1)\|}+\sqrt{\|\phi-\psi\|\|(a_0-a_1)b_1(c_0-c_1)\|})\\
%&\le& k\sqrt{\|\phi-\psi\|},
%\end{eqnarray*}
\begin{flalign*}
&|\mathcal S(\phi, \mathcal M_\mathcal A, \mathcal M_\mathcal B, \mathcal M_\mathcal C)-\mathcal S(\psi, \mathcal M_\mathcal A, \mathcal M_\mathcal B, \mathcal M_\mathcal C)|&\\
&=\left|\displaystyle\sup_{\{A_x, B_y, C_z\}}\left(\sqrt{|I_\phi|}+\sqrt{|J_\phi|}\right)-\displaystyle\sup_{\{A_x, B_y, C_z\}}\left(\sqrt{|I_\psi|}+\sqrt{|J_\psi|}\right)\right| &\\
&= \left|\displaystyle\sup_{\{A_x, B_y, C_z\}}\left(\sqrt{|\phi((A_0+A_1)B_0(C_0+C_1))|}+\sqrt{|\phi((A_0-A_1)B_1(C_0-C_1))|}\right)\right. &\\
&\qquad\left.-\displaystyle\sup_{\{A_x, B_y, C_z\}}\left(\sqrt{|\psi((A_0+A_1)B_0(C_0+C_1))|}+\sqrt{|\psi((A_0-A_1)B_1(C_0-C_1))|}\right)\right| &\\
&\le \displaystyle\sup_{\{A_x, B_y, C_z\}}\left|\sqrt{|\phi((A_0+A_1)B_0(C_0+C_1))|}+\sqrt{|\phi((A_0-A_1)B_1(C_0-C_1))|} \right.&\\
&\qquad \left.-\sqrt{|\psi((A_0+A_1)B_0(C_0+C_1))|}-\sqrt{|\psi((A_0-A_1)B_1(C_0-C_1))|}\right| &\\
&\le \displaystyle\sup_{\{A_x, B_y, C_z\}}\left(\left|\sqrt{|\phi((A_0+A_1)B_0(C_0+C_1))|}-\sqrt{|\psi((A_0+A_1)B_0(C_0+C_1))|}\right|\right. &\\
& \qquad\left.+\left|\sqrt{|\phi((A_0+A_1)B_0(C_0+C_1))|}-\sqrt{|\psi((A_0-A_1)B_1(C_0-C_1))|}\right|\right) &\\
&\le \displaystyle\sup_{\{A_x, B_y, C_z\}}\left(\sqrt{|(\phi-\psi)((A_0+A_1)B_0(C_0+C_1))|}+\sqrt{|(\phi-\psi)((A_0-A_1)B_1(C_0-C_1))|}\right) &\\
&\le \displaystyle\sup_{\{A_x, B_y, C_z\}}\left(\sqrt{\|\phi-\psi\|\,\|(A_0+A_1)B_0(C_0+C_1)\|}+\sqrt{\|\phi-\psi\|\,\|(A_0-A_1)B_1(C_0-C_1)\|}\right) &\\
&\le k\sqrt{\|\phi-\psi\|}, &
\end{flalign*}
where the fourth one follows the Cauchy Schwarz inequality, and the last obeys the norm for elements of $\mathcal M_\mathcal A,\mathcal M_\mathcal B,\mathcal M_\mathcal C$ are bounded.
So $\displaystyle\sup_{\{A_x, B_y, C_z\}}(\sqrt{|I_\phi|}+\sqrt{|J_\phi|})$ is norm continuous in the state $\phi$.
\end{prof}

\section{Maximal violation of bilocal inequalities and algebraic structures}

In this section, we aim to show that the violation of bilocal inequalities, in particular the maximal violation, can reflect the structural properties of the algebra. Here, violation refers to exceeding the  bound   for $\mathcal{S}_\tau$ in Eq.\,(\ref{biin}), while maximal violation means attaining the value $2\sqrt{2}$ for the same quantity.

The following theorem analyzes the conditions for maximal violation.
%, with its detailed proof given in Appendix~\ref{sec-Thm5}. 

\begin{theorem}\label{Thm5} For the ternary mutually-commuting von Neumann algebra models of entanglement swapping networks, if the network state $\tau\in\mathcal{M}_\mathcal{ABC}^*$ is faithful, then
$$\sqrt{|I_\tau|}+\sqrt{|J_\tau|}=2\sqrt{2}$$
if and only if $\tau(A_i^2A)=\tau(A),\ \tau(B_i^2B)=\tau(B),\ \tau(C_i^2C)=\tau(C)$, and $\tau[(A_0A_1+A_1A_0)A]=0,\ \tau[(C_0C_1+C_1C_0)C]=0$ for any $A\in\mathcal{M}_\mathcal A,\ B\in\mathcal M_\mathcal B,\  C\in\mathcal{M}_\mathcal C$ with $i\in\{0,1\}$. \\
\end{theorem}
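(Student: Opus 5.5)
The plan is to walk through the chain of inequalities in the proof of Theorem \ref{Thm1} and find exactly when each link is an equality. Sufficiency will then come from a direct computation.

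\emph{Necessity.} Assume $\sqrt{|I_\tau|}+\sqrt{|J_\tau|}=2\sqrt{2}$. Every inequality in the proof of Theorem \ref{Thm1} must then be an equality.

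The last link gives $\tau(A_0^2+A_1^2)=2$. Since $I-A_i^2\ge 0$, this yields $\tau(I-A_i^2)=0$ for $i=0,1$. Faithfulness of $\tau$ then forces $A_i^2=I$, and similarly $C_i^2=I$. This already gives $\tau(A_i^2A)=\tau(A)$ and $\tau(C_i^2C)=\tau(C)$ for all $A$ and $C$.

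For $B$, look at the Cauchy--Schwarz step $\|\pi_\tau(B_y)\Omega\|\le 1$, which must be an equality. That gives $\tau(B_y^2)=1$, so $\tau(I-B_y^2)=0$, hence $B_y^2=I$ by faithfulness, and so $\tau(B_i^2B)=\tau(B)$.

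Next use equality in
\[
\tau(A_0+A_1)^2+\tau(A_0-A_1)^2=2\tau(A_0^2+A_1^2),
\]
together with the independence condition ($*$). The intermediate step bounding $\tau\bigl((A_0\pm A_1)^2(C_0\pm C_1)^2\bigr)$ factors as $\tau\bigl((A_0\pm A_1)^2\bigr)\tau\bigl((C_0\pm C_1)^2\bigr)$. Writing $a_\pm=\tau((A_0\pm A_1)^2)$ and $c_\pm$ analogously, we have $a_++a_-=4$ and $c_++c_-=4$. The Cauchy--Schwarz step $\sqrt{a_+c_+}+\sqrt{a_-c_-}\le 4$ is then an equality, and equality in the very first step forces $a_+c_+=a_-c_-$. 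Together these give $a_+=a_-=c_+=c_-=2$. Since $(A_0+A_1)^2=2I+\{A_0,A_1\}$ once $A_i^2=I$, this means $\tau(\{A_0,A_1\})=0$, and likewise $\tau(\{C_0,C_1\})=0$.

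\emph{Main obstacle.} The statement asks for $\tau(\{A_0,A_1\}A)=0$ for \emph{all} $A\in\mathcal M_\mathcal A$, not merely $A=I$. To get this, I would use equality in the vector Cauchy--Schwarz step: $\pi_\tau(B_0)\Omega$ must be parallel to $\pi_\tau\bigl((A_0+A_1)(C_0+C_1)\bigr)\Omega$, with norms matching up to a factor. Squaring, and using $B_0^2=I$ and faithfulness (so that $\Omega$ is separating), I expect $(A_0+A_1)^2(C_0+C_1)^2$ to act as a multiple of the identity on $\Omega$. Hence
\[
(A_0+A_1)^2(C_0+C_1)^2=4I .
\]
Combined with the analogous identity for the minus combination, and using that mutually commuting positive elements $X\in\mathcal M_\mathcal A$, $Y\in\mathcal M_\mathcal C$ with $XY=4I$ must be scalar (the joint spectrum lies on a hyperbola, and $X+X'=4$ where $X'=(A_0-A_1)^2$), this forces $\{A_0,A_1\}$ to be a scalar. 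That scalar is zero by $\tau(\{A_0,A_1\})=0$, which gives the stated condition for arbitrary $A$. The same argument handles $C$. This operator-level rigidity step is the delicate one, and I would check it carefully, possibly by passing to the spectral decomposition of the commuting pair $(A_0+A_1)^2$, $(C_0+C_1)^2$.

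\emph{Sufficiency.} By faithfulness, the hypotheses give $A_i^2=B_i^2=C_i^2=I$ and $\{A_0,A_1\}=\{C_0,C_1\}=0$. Then $(A_0\pm A_1)/\sqrt2$ and $(C_0\pm C_1)/\sqrt2$ are self-adjoint unitaries. Choosing $B_0$, $B_1$ appropriately (as in the bipartite construction of Summers et al.) should give $|I_\tau|=|J_\tau|=2$, hence $\mathcal S_\tau=2\sqrt2$. I suspect the ``if'' direction implicitly requires such an optimal choice of $B_y$, or an alignment like $B_0\Omega\propto(A_0+A_1)(C_0+C_1)\Omega$. I would state that explicitly and verify it by direct computation.
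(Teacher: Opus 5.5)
Your necessity half is correct and follows the paper's argument. You force equality in every link of the chain from Theorem \ref{Thm1} and use faithfulness to turn $\tau(I-A_i^2)=\tau(I-C_i^2)=\tau(I-B_y^2)=0$ into operator identities. You then get $a_\pm=c_\pm=2$ (the paper's $t=1$ and $\tau(A_0A_1+A_1A_0)=\tau(C_0C_1+C_1C_0)=0$). Finally you use $\pi_\tau(B_y)\Omega=k_y\pi_\tau(\cdots)\Omega$ with $k_y^2=\frac14$ and the separating property of $\Omega$.

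There are two small differences. First, you get $B_y^2=I$ directly from $\tau(B_y^2)=1$, which is cleaner than the paper's detour. Second, in the rigidity step you use both identities $PQ=4I$ and $(4I-P)(4I-Q)=4I$, with $P=(A_0+A_1)^2$ and $Q=(C_0+C_1)^2$. These give $P+Q=4I$, hence $(P-2I)^2=0$, i.e.\ $\{A_0,A_1\}=0$ (similarly for $C$), with no further appeal to $\tau$. The paper instead uses only the ``$+$'' identity $2X+2Y+XY=0$, where $X=A_0A_1+A_1A_0$ and $Y=C_0C_1+C_1C_0$. Combined with ($*$) and $\tau(X)=\tau(Y)=0$, this gives $4\tau(X^2)+4\tau(Y^2)+\tau(X^2)\tau(Y^2)=0$.

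Write your step that way. The stand-alone claim that commuting positive $S\in\mathcal M_\mathcal A$, $T\in\mathcal M_\mathcal C$ with $ST=4I$ must be scalars is false in a general TMCvNA model (take $\mathcal M_\mathcal A=\mathcal M_\mathcal C$ abelian). It holds here only because faithfulness plus ($*$) forces $\mathcal M_\mathcal A\cap\mathcal M_\mathcal C=\mathbb C I$, or more simply because of the second identity.

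On sufficiency your suspicion is justified, and the defect lies in the statement and the paper's proof, not in your plan. The paper says (\ref{eq1})--(\ref{eq5}) can be ``checked'' from the hypotheses. But (\ref{eq1})--(\ref{eq3}) concern how $\tau$ correlates $B_y$ with $A_x,C_z$, and the purely local hypotheses say nothing about that.

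Here is a counterexample. Take $\mathcal H=\mathbb C^2\otimes\mathbb C^2\otimes\mathbb C^2$ with $\mathcal M_\mathcal A=M_2(\mathbb C)\otimes I\otimes I$, $\mathcal M_\mathcal B=I\otimes M_2(\mathbb C)\otimes I$, $\mathcal M_\mathcal C=I\otimes I\otimes M_2(\mathbb C)$. Let $\tau=\frac18{\rm Tr}$, which is faithful and satisfies ($*$). Let $A_0,A_1$ be the Pauli matrices $\sigma_x,\sigma_z$ on the first factor, $C_0,C_1$ the same Paulis on the third factor, and $B_0=B_1=I$. Every hypothesis of the theorem holds, yet $I_\tau=\tau(A_0+A_1)\tau(B_0)\tau(C_0+C_1)=0=J_\tau$.

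Because $\tau$ is a product state, this holds for \emph{every} choice of $B_y\in\mathcal M_\mathcal B$. So your first repair (``choose $B_0,B_1$ optimally as in Summers et al.'') also fails.

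Only your second repair is sound. Add $\pi_\tau(B_0)\Omega=\pm\frac12\pi_\tau((A_0+A_1)(C_0+C_1))\Omega$ and $\pi_\tau(B_1)\Omega=\pm\frac12\pi_\tau((A_0-A_1)(C_0-C_1))\Omega$ to the hypotheses. Then $I_\tau=\pm\frac12\tau((A_0+A_1)^2(C_0+C_1)^2)=\pm2$, and likewise $J_\tau=\pm2$. What is actually provable is the ``only if'' direction plus this strengthened converse. With that repair made explicit, your proposal proves exactly that.
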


\begin{prof}
If $\sqrt{|I_\tau|}+\sqrt{|J_\tau|}=2\sqrt{2}$, it follows from the proof of Theorem \ref{Thm1} that for any $t\in [0,1]$, these equalities hold:
%$$\left\{\begin{array}{l}
%|\tau(b_0(a_0+a_1)(c_0+c_1))|=|\tau(b_1(a_0-a_1)(c_0-c_1))|\\
%\pi_\tau(b_0)\Omega=k_0\pi_\tau[(a_0+a_1)(c_0+c_1)]\Omega, \ \pi_\tau(b_1)\Omega=k_1\pi_\tau[(a_0-a_1)(c_0-c_1)]\Omega\\
%\|\pi_\tau(b_0)\Omega\|=\|\pi_\tau(b_1)\Omega\|=1\\
%\tau[(a_0+a_1)^2]=t\tau[(c_0+c_1)^2],\ \tau[(a_0-a_1)^2]=t\tau[(c_0-c_1)^2]\\
%a_0^2+a_1^2=2I,c_0^2+c_1^2=2I
%\end{array}\right.$$
\begin{equation}\label{eq1}
|\tau(B_0(A_0+A_1)(C_0+C_1))|=|\tau(B_1(A_0-A_1)(C_0-C_1))|,
\end{equation}
\begin{equation}\label{eq2.1}
\pi_\tau(B_0)\Omega=k_0\pi_\tau[(A_0+A_1)(C_0+C_1)]\Omega,
\end{equation}
\begin{equation}\label{eq2.2}
\pi_\tau(B_1)\Omega=k_1\pi_\tau[(A_0-A_1)(C_0-C_1)]\Omega,\end{equation}
\begin{equation}\label{eq3}
\|\pi_\tau(B_0)\Omega\|=\|\pi_\tau(B_1)\Omega\|=1,
\end{equation}
\begin{equation}\label{eq4.1}
\tau[(A_0+A_1)^2]=t\tau[(C_0+C_1)^2],
\end{equation}
\begin{equation}\label{eq4.2}
\tau[(A_0-A_1)^2]=t\tau[(C_0-C_1)^2],
\end{equation}
\begin{equation}\label{eq5}
A_0^2+A_1^2=2I,C_0^2+C_1^2=2I.
\end{equation}
From  (\ref{eq5}), one gets $A_i^2=I,\ C_i^2=I\ (i=0,1)$ because $-I\le A_i,C_i\le I$, therefore $\tau(A_i^2A)=\tau(A), \ \tau(C_i^2C)=\tau(C)$ for any $A\in\mathcal M_\mathcal A,\ B\in\mathcal M_\mathcal B,\ C\in\mathcal M_\mathcal C$.

Now, let us show the proof for $\tau(B_i^2B)=\tau(B),\ \tau[(A_0A_1+A_1A_0)A]=0$, and $\tau[(C_0C_1+C_1C_0)C]=0$ for any $ A\in\mathcal M_\mathcal A,\ B\in\mathcal M_\mathcal B,\ C\in\mathcal M_\mathcal C$, which implies $ B_i^2=I,$ $A_0A_1+A_1A_0=0,$ and $ C_0C_1+C_1C_0=0$. According to  Eqs.\,(\ref{eq4.1})-(\ref{eq5}),
i.e.,$$\left\{\begin{array}{c}
\tau(A_0^2+A_1^2+A_0A_1+A_1A_0)=t\tau(C_0^2+C_1^2+C_0C_1+C_1C_0)\\
\tau(A_0^2+A_1^2-A_0A_1-A_1A_0)=t\tau(C_0^2+C_1^2-C_0C_1-C_1C_0)\\
A_0^2+A_1^2=2 I, \ C_0^2+C_1^2=2 I,
\end{array}\right.$$
we can get $$t=1,\ \tau(A_0A_1+A_1A_0)=\tau(C_0C_1+C_1C_0).$$
Then combing  conditions (\ref{eq2.1}), (\ref{eq2.2}) and condition (\ref{eq3}),
%\begin{equation}\label{o0}
%\pi_\tau(b_0)\Omega=k_0\pi_\tau[(a_0+a_1)(c_0+c_1)]\Omega,
%\end{equation}
%\begin{equation}
%\pi_\tau(b_1)\Omega=k_1\pi_\tau[(a_0-a_1)(c_0-c_1)]\Omega,\end{equation}
%$$\|\pi_\tau(b_i)\Omega\|=1,$$
one can get
$$
k_0^2\tau[(A_0+A_1)^2(C_0+C_1)^2]=1,
\ k_1^2\tau[(A_0-A_1)^2(C_0-C_1)^2]=1,
$$
 i.e., \begin{equation}\label{k0k1}
     |k_0|=\frac{1}{2+\tau(X)}, \ \ \ |k_1|=\frac{1}{2-\tau(X)}
 \end{equation} because of $\tau(X)=\tau(Y)$, where $X=A_0A_1+A_1A_0,\ Y=C_0C_1+C_1C_0$.
 Then according to Eqs.\,(\ref{eq2.1}), (\ref{eq2.2}), and (\ref{eq1}),
 %\begin{equation}\label{e1}
%|\tau(b_0(a_0+a_1)(c_0+c_1))|=|\tau(b_1(a_0-a_1)(c_0-c_1))|
%\end{equation}
$$|\tau[B_0(A_0+A_1)(C_0+C_1)]|=|k_0||\tau[(A_0+A_1)^2(C_0+C_1)^2]|,$$
$$|\tau[B_1(A_0-A_1)(C_0-C_1)]|=|k_1||\tau[(A_0-A_1)^2(C_0-C_1)^2]|.$$
Substituting (\ref{k0k1}) and (\ref{eq1}) to the above equations, we have $$\frac{1}{2+\tau(X)}(2+\tau(X))^2=\frac{1}{2-\tau(X)}(2-\tau(X))^2$$
deriving $\tau(X)=\tau(Y)=0$. It follows from  $\tau(X)=0=\tau(Y)$ and $\|\pi_\tau(B_i)\Omega\|=1$ that $$k_0^2=k_1^2=\frac{1}{4}$$ from  Eq.\,(\ref{k0k1}).
Since \begin{eqnarray*}
|\langle\Omega,\pi_\tau(B_0^2)\Omega\rangle|&=&|\frac{1}{4}\langle\Omega,\pi_\tau[(A_0+A_1)^2(C_0+C_1)^2]\Omega\rangle|\\
&\le&\frac{1}{4}\sqrt{\|\pi_\tau[(A_0+A_1)(C_0+C_1)]\Omega\|}\sqrt{\|\pi_\tau[(A_0+A_1)(C_0+C_1)]\Omega\|}\\
&=&\frac{1}{4}\tau[(A_0+A_1)^2]\tau[(C_0+C_1)^2]=1,
\end{eqnarray*}
and $\tau(B_0^2)=|\langle\Omega,\pi_\tau(B_0^2)\Omega\rangle|=1$,
this implies that $\pi_\tau[(B_0^2)]\Omega=\Omega$, so $$\tau(B_0^2 B)=\langle\Omega,\pi_\tau(B_0^2)\pi_\tau(B)\Omega\>=\<\pi_\tau(B_0^2)\Omega,\pi_\tau(B)\Omega\>=\<\Omega,\pi_\tau(B)\Omega\>=\tau(B).$$
Similarly $\tau(B_1^2B)=\tau(B)$ for any $B\in\mathcal M_\mathcal B.$
Furthermore, it follows from
Eq.\,(\ref{eq2.1}) and $\pi_\tau[(B_0^2)]\Omega=\Omega$ that $$\pi_\tau(2X+2Y+XY)\Omega=0.$$  So $\tau[(2X+2Y+XY)^2]=0,$ implying that $$\tau(X^2)=\tau(Y^2)=0.$$ Combining the faithfulness, non-negativity of the state $\tau$ and the self-adjointness of $X, \ Y$. Then $X=Y=0$, i.e., $$A_0A_1+A_1A_0=C_0C_1+C_1C_0=0,$$ and implies that $\tau[(A_0A_1+A_1A_0)A]=\tau[(C_0C_1+C_1C_0)C]=0$ for any $A\in\mathcal{M}_\mathcal{A},\ C\in\mathcal{M}_\mathcal{C}$.

It is straightforward to prove the converse process, as we check that Eqs.\,(\ref{eq1})-(\ref{eq5}) hold  if  $\tau(A_i^2A)=\tau(A),\ \tau(B_i^2B)=\tau(B),\ \tau(C_i^2C)=\tau(C)$, and $\tau[(A_0A_1+A_1A_0)A]=0,\ \tau[(C_0C_1+C_1C_0)C]=0$.
 We complete the proof. 
\end{prof}
%{\bf Remark.} This theorem states that when the inequality is maximally violated,  $A_0,\ A_1$ and $-\frac{i}{2}[A_0,A_1]$ form a realization of the Pauli spin matrices in $\mathcal M_\mathcal A$ and generate a copy of $M_2(\C)$ in $\mathcal M_\mathcal A$. The same holds for $C_0,\ C_1$ and $-\frac{i}{2}[C_0,C_1]$ in $\mathcal M_\mathcal C$. Moreover, $\tau(A_0A_1+A_1A_0)=0,\ \tau(C_0C_1+C_1C_0)=0.$\\

To further elucidate the algebraic relations presented in Theorem \ref{Thm5}, we provide the following corollary.
%, whose proof is given in Appendix~\ref{sec-Coro6}.

\begin{corollary}\label{Coro6}
For the ternary mutually-commuting von Neumann algebra models of entanglement swapping networks,  the bilocal inequality can be maximally violated if and only if \( \mathcal{M}_\mathcal{A} \) and \( \mathcal{M}_\mathcal{C} \) contain subalgebras isomorphic to \( M_2(\mathbb C) \) and there exists a  faithful state $\tau\in\mathcal{M}_\mathcal{ABC}^*$ that satisfies the independent condition ($*$): $\tau(AC)=\tau(A)\tau(C)$ for all $A\in\mathcal{M}_\mathcal{A},\ C\in\mathcal{M}_\mathcal{C}$.
\end{corollary}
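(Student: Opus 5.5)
The plan is to treat the two directions separately and to lean on Theorem \ref{Thm5} for the algebraic relations, reading ``maximally violated'' as ``there exist a faithful network state $\tau$ and observables $A_x,B_y,C_z$ with $\mathcal S_\tau=2\sqrt 2$''.

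\emph{Necessity.} Suppose $\mathcal S_\tau=2\sqrt2$ for a faithful $\tau$ satisfying ($*$). The proof of Theorem \ref{Thm5} gives $A_i^2=I$ and $A_0A_1+A_1A_0=0$; the same holds for $C_0,C_1$. Self-adjoint anticommuting symmetries generate a copy of $M_2(\mathbb C)$, and I would show this explicitly. Put $A_2=-\tfrac{i}{2}[A_0,A_1]=-iA_0A_1$. This operator is self-adjoint, satisfies $A_2^2=I$, and anticommutes with $A_0$ and $A_1$. Define the matrix units $e_{11}=\tfrac12(I+A_0)$, $e_{22}=\tfrac12(I-A_0)$, $e_{12}=e_{11}A_1e_{22}$ and $e_{21}=e_{12}^*$. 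From the anticommutation relation, $A_1e_{22}=e_{11}A_1$, and hence $e_{12}e_{21}=e_{11}A_1e_{22}A_1e_{11}=e_{11}$. The relations $e_{21}e_{12}=e_{22}$ and $e_{11}+e_{22}=I$ follow in the same way. Both $e_{11}$ and $e_{22}$ are nonzero: if $e_{22}=0$ then $A_0=I$, and $A_0A_1+A_1A_0=0$ would force $A_1=0$, contradicting $A_1^2=I$. So these elements span a unital copy of $M_2(\mathbb C)$ inside $\mathcal M_\mathcal A$, and likewise inside $\mathcal M_\mathcal C$. The faithful state satisfying ($*$) is $\tau$ itself.

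\emph{Sufficiency.} Given copies of $M_2(\mathbb C)$ in $\mathcal M_\mathcal A$ and $\mathcal M_\mathcal C$ and a faithful $\tau$ with ($*$), choose $A_0=\sigma_x^{\mathcal A}$, $A_1=\sigma_z^{\mathcal A}$, and similarly $C_0,C_1$, built from the matrix units. These satisfy $A_i^2=C_i^2=I$ and $A_0A_1+A_1A_0=0=C_0C_1+C_1C_0$. Then $(A_0\pm A_1)^2=2I$, and since $\tau(AC)=\tau(A)\tau(C)$ we get $\tau\big((A_0\pm A_1)^2(C_0\pm C_1)^2\big)=4$. Next choose Bob's observables. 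The natural candidates are $B_0=\tfrac12(A_0+A_1)(C_0+C_1)$ and $B_1=\tfrac12(A_0-A_1)(C_0-C_1)$. These are self-adjoint, because the $\mathcal A$ and $\mathcal C$ factors commute, and they are symmetries. With them, $I_\tau=\tfrac12\tau\big((A_0+A_1)^2(C_0+C_1)^2\big)=2$ and $J_\tau=2$, so $\mathcal S_\tau=2\sqrt2$.

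\emph{Main obstacle.} These $B_y$ lie in $(\mathcal M_\mathcal A\vee\mathcal M_\mathcal C)''$, not in $\mathcal M_\mathcal B$. To make the construction work, I would need either an assumption that $\mathcal M_\mathcal B$ contains the appropriate elements, or a modification of the network state. The fix I would try first is to use the equality conditions (\ref{eq2.1})--(\ref{eq3}) from Theorem \ref{Thm5}, which only require $\pi_\tau(B_y)\Omega$ to equal $\tfrac12\pi_\tau\big((A_0\pm A_1)(C_0\pm C_1)\big)\Omega$. I would then argue that $\Omega$ can be arranged to be ``maximally entangled'' across $\mathcal M_\mathcal B$ and the copies of $M_2(\mathbb C)$, for instance by composing $\tau$ with conditional expectations onto the $M_2$ copies and their relative commutants. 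Failing that, I would state the converse under the additional hypothesis that $\mathcal M_\mathcal B$ contains a copy of $M_2(\mathbb C)\otimes M_2(\mathbb C)$ suitably correlated with $\mathcal M_\mathcal A$ and $\mathcal M_\mathcal C$ by $\tau$. This step is where I expect the real difficulty to lie.
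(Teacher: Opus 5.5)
Your necessity direction is correct (though vacuous, as explained below), and your explicit matrix units are a cleaner version of the paper's span argument. The real gap is sufficiency, which you leave open. It cannot be closed, because the ``if'' direction of the statement is false.

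\textbf{Counterexample.} Take $\mathcal H=\mathbb C^2\otimes\mathbb C^2$, $\mathcal M_{\mathcal A}=M_2(\mathbb C)\otimes I$, $\mathcal M_{\mathcal B}=\mathbb C I$, $\mathcal M_{\mathcal C}=I\otimes M_2(\mathbb C)$, and let $\tau$ be the normalized trace. Then $\tau$ is faithful and satisfies $(*)$, and $\mathcal M_{\mathcal A}$ and $\mathcal M_{\mathcal C}$ are copies of $M_2(\mathbb C)$. But every $B_y$ is a scalar $b_y\in[-1,1]$. So for every state with $(*)$ you get $\tau(A_xB_yC_z)=b_y\tau(A_x)\tau(C_z)$. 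Cauchy--Schwarz together with $|u+v|+|u-v|=2\max(|u|,|v|)\le 2$ then gives $\mathcal S_\tau\le 2$. The hypothesis says nothing about $\mathcal M_{\mathcal B}$, which is exactly where you located the difficulty.

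The paper's own sufficiency argument fails for the same reason. It takes $B_0=B_1=I$ and invokes the converse of Theorem \ref{Thm5}. With $B_y=I$ the correlations factor as above, so $\mathcal S_\tau\le 2$. The algebraic conditions listed in Theorem \ref{Thm5} do not imply (\ref{eq2.1})--(\ref{eq2.2}). You were right not to follow that route.

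\textbf{Why your proposed repair fails.} Your repair through the vector identities (\ref{eq2.1})--(\ref{eq2.2}) cannot work while $\tau$ is faithful. Since $\|\pi_\tau(Z)\Omega\|^2=\tau(Z^*Z)$, faithfulness turns those identities into operator identities: $B_0$ and $B_1$ must be nonzero multiples of $(A_0+A_1)(C_0+C_1)$ and $(A_0-A_1)(C_0-C_1)$ respectively. Because $B_0,B_1$ commute with $A_0$, this gives $[A_0,A_1](C_0+C_1)=0$ and $[A_0,A_1](C_0-C_1)=0$. Hence $[A_0,A_1]C_0=0$, and since $C_0^2=I$, $[A_0,A_1]=0$. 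Combined with $A_0A_1+A_1A_0=0$, this yields $A_0A_1=0$, contradicting $A_0^2=A_1^2=I$.

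So no faithful network state attains $2\sqrt2$ at all. This is why your necessity argument is vacuous. It also means no extra assumption on $\mathcal M_{\mathcal B}$ of the kind you contemplate can rescue sufficiency as long as faithfulness is demanded.

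A true statement must drop faithfulness. For example, two Bell pairs with $A_x=C_x=(\sigma_z+(-1)^x\sigma_x)/\sqrt2$, $B_0=\sigma_z\otimes\sigma_z$ and $B_1=\sigma_x\otimes\sigma_x$ give $2\sqrt2$ with a pure, non-faithful state satisfying $(*)$. A true statement must also impose conditions on $\mathcal M_{\mathcal B}$ and on how $\tau$ correlates it with the $M_2(\mathbb C)$ copies, rather than merely asking for some faithful product state.
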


\begin{prof}
Note that for any von Neumann algebra, there always exists a faithful state $\tau\in\mathcal{M}_{\mathcal{ABC}}^*$.  

$(\Leftarrow)$ Suppose $\mathcal{M}_\mathcal{A}$ (resp. $\mathcal{M}_\mathcal{C}$) contains a subalgebra $\mathcal{M}_\mathcal{A}^{sub}\simeq M_2(\C)$ (resp. $\mathcal{M}_\mathcal{C}^{sub}\simeq M_2(\C)$) and $\tau$ satisfies $(*)$. 

Then there exist operators $A_0,A_1,$ and $A_2:=-\frac{{\rm{i}}}{2}[A_0,A_1]$ in $\mathcal{M}_\mathcal{A}^{sub}$ such that they anticommute and $A_i^2=I$ for $i\in\{0,1,2\}$, where ${\rm{i}}^2=-1$. Consequently, we obtain $$\tau(A_i^2A)=\tau(A),\ \tau[(A_0A_1+A_1A_0)A]=0$$ for any $A\in\mathcal{M}_\mathcal{A}$. 
Similarly for $C\in\mathcal{M}_\mathcal{C}$ with $C_0,C_1,C_2:=-\frac{{\rm{i}}}{2}[C_0,C_1]$. 

Setting 
$B_0=B_1=I$ gives $\tau(B_i^2B)=\tau(B)$ for all $B\in\mathcal{M}_\mathcal{B}$. By Theorem \ref{Thm5}, these operators yield the maximal violation $2\sqrt{2}$ of the quantity $\mathcal{S}_\tau$ in Eq.\,(\ref{biin}). 

$(\Rightarrow)$ Conversely, assume $\mathcal{S}_\tau$ in Eq.\,(\ref{biin}) attains the maximal violation $2\sqrt{2}$. 

Then for any $A\in\mathcal{M}_\mathcal{A},\ B\in\mathcal{M}_\mathcal{B},\ C\in\mathcal{M}_\mathcal{C}$ and $i\in\{0,1\}$, we have $\tau(A_i^2A)=\tau(A),\tau(B_i^2B)=\tau(B),\tau(C_i^2)=\tau(C)$, and $\tau[(A_0A_1+A_1A_0)A]=0, \ \tau[(C_0C_1+C_1C_0)C]=0$.

Taking $A=A_0A_1+A_1A_0$ and using the faithfulness of state $\tau$ together with $\tau[(A_0A_1+A_1A_0)A]=0$, one gets $$A_0A_1+A_1A_0=0,$$ i.e., $A_0A_1=-A_1A_0$. The algebra generated by $A_0,A_1$ is $$\mathfrak{A}(A_0,A_1):=\{\sum_k \alpha_k A_0^mA_1^n|\alpha_k\in\C,m,n\in\N\}.$$ 
From $\tau(A_i^2A)=\tau(A)$ and $-I\le A_i\le I \ (i\in\{0,1\} )$, setting $A=I$ gives $\tau(I-A_i^2)=0$. Faithfulness of state $\tau$ then implies $A_i^2=I$.  The same reasoning yields $B_i^2=I\ \textmd{and} \ C_i^2=I$.

Because $A_i^2=I$, one gets $\sum_k \alpha_k A_0^mA_1^n=\alpha_0 I+\alpha_1 A_0+\alpha_2A_1+\alpha_3A_0A_1$.
Hence,
$$\mathfrak{A}(A_0,A_1)={\rm span}\{I, A_0,A_1,-\frac{{\rm{i}}}{2}[A_0,A_1]\}\simeq M_2(\C).$$ 
Analogously, $\mathfrak{A}(C_0,C_1)\simeq M_2(\C)$. So this proof is completed.
\end{prof}

The above results show a close connection between the maximal violation of Bell-type inequalities and algebraic structures. This connection can be more clearly demonstrated when all algebras are factors.

\begin{corollary}\label{Coro7}
For the ternary tensor product algebra models of entanglement swapping networks, i.e.,  \(  \mathcal{M}_\mathcal{ABC} = \mathcal{M}_\mathcal{A} \otimes   \mathcal{M}_\mathcal{B} \otimes  \mathcal{M}_\mathcal{C}  \), assume that  $\mathcal{M}_\mathcal{A},  \mathcal{M}_\mathcal{B},  \mathcal{M}_\mathcal{C}$ are factors. Then   the bilocal inequality cannot be maximally violated if and only if either \( \mathcal{M}_\mathcal{A} \) or \( \mathcal{M}_\mathcal{C} \) is type {\rm I}$_{2k+1}$ for some finite positive integer $k$.
\end{corollary}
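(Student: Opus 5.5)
The plan is to reduce the statement to Corollary \ref{Coro6}, so that maximal violation becomes a purely algebraic question: does each of $\mathcal{M}_\mathcal{A}$ and $\mathcal{M}_\mathcal{C}$ contain a unital copy of $M_2(\mathbb C)$? The state condition in Corollary \ref{Coro6} should be harmless in the tensor product setting. Choose faithful normal states $\omega_\mathcal A$, $\omega_\mathcal B$, $\omega_\mathcal C$ on the three factors; these exist at least when the algebras are $\sigma$-finite, as the paper implicitly assumes when it says faithful states always exist. Then put $\tau=\omega_\mathcal A\otimes\omega_\mathcal B\otimes\omega_\mathcal C$. This product state is faithful on $\mathcal{M}_\mathcal A\otimes\mathcal{M}_\mathcal B\otimes\mathcal{M}_\mathcal C$ and satisfies $\tau(AC)=\omega_\mathcal A(A)\omega_\mathcal C(C)=\tau(A)\tau(C)$, which is the independence condition $(*)$. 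So by Corollary \ref{Coro6}, maximal violation occurs if and only if both $\mathcal{M}_\mathcal A$ and $\mathcal{M}_\mathcal C$ contain a subalgebra isomorphic to $M_2(\mathbb C)$. Here "contain" must mean a unital embedding, because the proof of Corollary \ref{Coro6} uses $A_i^2=I$. The corollary is then equivalent to the following claim: a factor $\mathcal M$ admits no unital embedding of $M_2(\mathbb C)$ if and only if $\mathcal M$ is of type I$_n$ with $n$ odd.

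I would prove this claim by running through the type classification. Suppose first that $\mathcal M\cong M_n(\mathbb C)$ with $n=2k+1$. A unital copy of $M_2(\mathbb C)$ would make $\mathbb C^n$ a unital $M_2(\mathbb C)$-module. Every such module is a multiple of $\mathbb C^2$, so $n$ would be even, a contradiction. Put differently, $A_0$ and $A_1$ would be anticommuting symmetries, and $A_1$ would map the $+1$-eigenspace of $A_0$ bijectively onto the $-1$-eigenspace, forcing $\operatorname{tr}A_0=0$ and hence $n$ even. For the converse direction every other factor must contain a unital $M_2(\mathbb C)$. In each case the route is to find a projection $p\sim I-p$ and a partial isometry $v$ with $v^*v=p$ and $vv^*=I-p$; the matrix units $p$, $v$, $v^*$, $I-p$ then generate the copy. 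The cases go as follows.
\begin{itemize}
\item Type I$_{2k}$: write $M_{2k}(\mathbb C)\cong M_2(\mathbb C)\otimes M_k(\mathbb C)$ directly.
\item Type I$_\infty$, II$_\infty$ and III: the identity is properly infinite, so $I$ can be halved, i.e.\ there is a $p$ with $p\sim I-p\sim I$.
\item Type II$_1$: the trace takes all values in $[0,1]$ on projections, so take $p$ with trace $1/2$. Then $p\sim I-p$ by comparison of projections in a finite factor.
\end{itemize}
The extreme case of type I$_1$, i.e.\ $\mathbb C$, has $n=1$ odd and is already covered (abelian, consistent with Theorem \ref{Thm2}).

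Finally, combine the two parts. Maximal violation fails if and only if $\mathcal{M}_\mathcal A$ or $\mathcal{M}_\mathcal C$ lacks $M_2(\mathbb C)$, and by the claim this happens if and only if one of them is type I$_{2k+1}$. The factor $\mathcal{M}_\mathcal B$ plays no role, since $B_0=B_1=I$ is allowed. The step I expect to be the main obstacle is the reduction to Corollary \ref{Coro6}. It requires making sure that "contains a subalgebra isomorphic to $M_2(\mathbb C)$" is read as a unital embedding, and that a faithful state satisfying $(*)$ exists. The latter would fail without $\sigma$-finiteness, so I would either state that assumption or pass to a faithful product of normal states on suitable $\sigma$-finite corners. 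The type-by-type halving of the identity is standard.
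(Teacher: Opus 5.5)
Your type-by-type analysis of unital copies of $M_2(\mathbb C)$ inside a factor is correct. The proof fails earlier, at the reduction to Corollary~\ref{Coro6}. For the direction ``neither factor of type I$_{2k+1}$ $\Rightarrow$ maximal violation'', your witness is the product state $\omega_\mathcal A\otimes\omega_\mathcal B\otimes\omega_\mathcal C$ with $B_0=B_1=I$, as in the proof of Corollary~\ref{Coro6}. Such data never violates the inequality at all. If $B_y=b_yI$, condition $(*)$ gives $I_\tau=b_0\,\tau(A_0+A_1)\,\tau(C_0+C_1)$ and $J_\tau=b_1\,\tau(A_0-A_1)\,\tau(C_0-C_1)$. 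Since $|\tau(A_0+A_1)|+|\tau(A_0-A_1)|=2\max_i|\tau(A_i)|\le 2$, and likewise for $C$, Cauchy--Schwarz yields $\mathcal S_\tau\le 2$. The same computation works for any product state and arbitrary $B_y$. So your remark that $\mathcal M_\mathcal B$ ``plays no role'' is exactly where the argument breaks. A violation needs a state entangled across A--B and across B--C, and Bob observables correlated with both sides; in the standard example these are $\sigma_z\otimes\sigma_z$ and $\sigma_x\otimes\sigma_x$ on a copy of $M_2(\mathbb C)\otimes M_2(\mathbb C)$ in $\mathcal M_\mathcal B$. This also shows the statement as written cannot be proved: for the factors $\mathcal M_\mathcal A=\mathcal M_\mathcal C=M_2(\mathbb C)$ and $\mathcal M_\mathcal B=\mathbb C$, no state satisfying $(*)$ violates the inequality, yet neither $\mathcal M_\mathcal A$ nor $\mathcal M_\mathcal C$ is of type I$_{2k+1}$.

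Your argument for the converse (type I$_{2k+1}$ $\Rightarrow$ no maximal violation) rests on an unstated restriction on states. The parity argument for $M_{2k+1}(\mathbb C)$ needs $A_0,A_1$ to be anticommuting symmetries on the whole space. Theorem~\ref{Thm5} gives this only when the violating state is faithful.

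If arbitrary states with $(*)$ are admitted, this direction is false. Take $\mathcal M_\mathcal A=\mathcal M_\mathcal C=M_3(\mathbb C)$ and $\mathcal M_\mathcal B=M_2(\mathbb C)\otimes M_2(\mathbb C)$. Let $A_0=\frac{1}{\sqrt2}(\sigma_z+\sigma_x)\oplus 1$, $A_1=\frac{1}{\sqrt2}(\sigma_z-\sigma_x)\oplus 1$, with the same choice for $C_z$, and $B_0=\sigma_z\otimes\sigma_z$, $B_1=\sigma_x\otimes\sigma_x$. Use the vector state $\phi^+\otimes\phi^+$ whose $A$- and $C$-parts lie in $\mathbb C^2\oplus 0\subset\mathbb C^3$. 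This gives $\mathcal S_\tau=2\sqrt2$ with two factors of type I$_3$.

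If faithfulness is imposed, maximal violation never occurs. Faithfulness turns the Cauchy--Schwarz equality $\pi_\tau(B_0)\Omega=k_0\,\pi_\tau((A_0+A_1)(C_0+C_1))\Omega$, with $|k_0|=\frac12$, into the operator identity $B_0=k_0(A_0+A_1)(C_0+C_1)$. Likewise $B_1=k_1(A_0-A_1)(C_0-C_1)$. Commuting these with $A_0$ and with $C_0$ gives $[A_0,A_1](C_0\pm C_1)=0$ and $(A_0\pm A_1)[C_0,C_1]=0$. Since $A_0^2=C_0^2=I$, this forces $[A_0,A_1]=[C_0,C_1]=0$. The argument of Theorem~\ref{Thm2}, which only uses that each pair commutes, then gives $\mathcal S_\tau\le 2$. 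Under this reading the converse holds only vacuously, and the first direction fails whenever neither $\mathcal M_\mathcal A$ nor $\mathcal M_\mathcal C$ is of odd type I.

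So Corollary~\ref{Coro6} cannot be used as a black box under either reading. Unital copies of $M_2(\mathbb C)$ in $\mathcal M_\mathcal A$ and $\mathcal M_\mathcal C$ do not characterize maximal violation. Any correct version of the statement has to involve $\mathcal M_\mathcal B$ and specify the admissible states.
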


\section{Discussion and Conclusions}\label{sec-conclusion} 

We investigate Bell type inequality for entanglement swapping network in von Neumann algebraic framework, extending the paradigm that Bell violation from observable algebra structure (notably type III factors). We identify algebraic constraints governing inequality violation, linking network nonlocality to the noncommutative structure of the underlying algebras, and further show that maximal violation conditions can reverse-engineer the type classication of von Neumann algebras.

This work represents merely the beginning of a much broader inquiry. Our current model focuses primarily on the simplest nontrivial network: the entanglement swapping scenario with two independent sources. The generalization of these results to arbitrary multipartite quantum networks remains an important open problem. In more complex architectures, the interplay between multiple independent sources and the commutation relations of their associated algebras is expected to reveal even richer structures of nonlocality in networks represented by mutually-commuting von Neumann algebras.

\vspace{1mm}
\th{Conflict of Interest} {\rm The authors declare no conflict of interest.}
\par\vspace{1mm}
%\noindent Please confirm the following statement:

%1) If you are an editorial board member/managing
%editors/editor-in-chief for Acta Mathematica Sinica, English Series, please fill in the following information in 
%this section: ``XX is an editorial board member/managing
%editors/editor-in-chief for Acta Mathematica Sinica, English Series
%and was not involved in the editorial review or the
%decision to publish this article. All authors declare that there are
%no competing interests."

%2) If you are not an editorial board member/managing
%editors/editor-in-chief for Acta Mathematica Sinica, English Series, please fill in the following information in 
%this section: ``The authors declare no conflict of interest. ''

\acknowledgements{\rm We thank the referees for their time and
comments. }

\end{document}